\documentclass[12pt]{article}

\usepackage[margin = 1.2in]{geometry}
\setlength{\marginparwidth}{1in}

\usepackage{xcolor}

\usepackage{amsmath,amsthm,amssymb,amsfonts}
\usepackage{mathtools}
\usepackage{bm} 
\usepackage{MnSymbol}
\usepackage[numbers,square,sort]{natbib}

\usepackage[colorlinks,linkcolor=red,anchorcolor=blue,citecolor=green]{hyperref}
\usepackage[capitalise,noabbrev]{cleveref}

\newtheorem{theorem}{Theorem}

\newtheorem{corollary}[theorem]{Corollary}
\newtheorem{definition}[theorem]{Definition}
\newtheorem{example}[theorem]{Example}
\newtheorem{lemma}[theorem]{Lemma}

\newtheorem{remark}[theorem]{Remark}

\newcommand{\CC}{\mathbb{C}}
\newcommand{\QQ}{\mathbb{Q}}
\newcommand{\ZZ}{\mathbb{Z}}
\newcommand{\FF}{\mathbb{F}}

\DeclareMathOperator{\fwe}{fwe}

\DeclareMathOperator{\ccwe}{ccwe}
\DeclareMathOperator{\wt}{wt}
\DeclareMathOperator{\Span}{span}

\DeclareMathOperator{\GL}{GL}
\DeclareMathOperator{\AGL}{AGL}

\DeclareMathOperator{\Hom}{Hom}

\DeclareMathOperator{\FS}{FS}
\DeclareMathOperator{\Inv}{Inv}

\DeclarePairedDelimiter{\set}{\{}{\}}

\DeclarePairedDelimiter{\abs}{\vert}{\vert}

\DeclarePairedDelimiter{\lrangle}{\langle}{\rangle}
\DeclarePairedDelimiter{\lrparath}{(}{)}

\begin{document}

\title{The complex conjugate invariants of Clifford groups}

\author{Eiichi Bannai \footnote{Professor Emeritus of Kyushu University, Fukuoka, Japan.
Postal Address: Asagaya-minami 3-2-33, Suginami-ku, Tokyo 166-0004, Japan. \href{bannai@math.kyushu-u.ac.jp}{bannai@math.kyushu-u.ac.jp}}
	\and
        Manabu Oura \footnote{Institute of Science and Engineering, Kanazawa University, Kakuma-machi, Kanazawa, Ishikawa 920-1192, Japan. \href{oura@se.kanazawa-u.ac.jp}{oura@se.kanazawa-u.ac.jp}}
    \and
        Da Zhao \footnote{School of Mathematical Sciences, Shanghai Jiao Tong University, 800 Dongchuan Road, Minhang District, Shanghai 200240, China. \href{jasonzd@sjtu.edu.cn}{jasonzd@sjtu.edu.cn}}
        }

\date{}

\maketitle

\begin{abstract}

Nebe, Rains and Sloane studied the polynomial invariants for real and complex Clifford groups and they relate the invariants to the space of complete weight enumerators of certain self-dual codes. 
The purpose of this paper is to show that very similar results can be obtained for the invariants of the complex Clifford group $\mathcal{X}_m$ acting on the space of conjugate polynomials in $2^m$ variables of degree $N_1$ in $x_f$ and of degree $N_2$ in their complex conjugates $\overline{x_f}$. 
In particular, we show that the dimension of this space is $2$, for $(N_1,N_2)=(5,5)$.
This solves the Conjecture 2 given in Zhu, Kueng, Grassl and Gross affirmatively. 
In other words if an orbit of the complex Clifford group is a projective $4$-design, then it is automatically a projective $5$-design.

\noindent\textbf{Keywords}: Clifford group, weight enumerator, self-dual code, unitary design 

\noindent\textbf{Mathematics Subject Classifications}: 15A66, 94B60, 05B30

\end{abstract}

\section{Motivation and Background}

The original motivation of this paper was to settle Conjecture 2 in page 26 of Zhu-Kueng-Grassl-Gross \cite{Zhu2016Sep}: The Clifford group fails gracefully to be a unitary 4-design, arXiv: 1609.08172. 
The Conjecture 2 there says if an orbit of the complex Clifford group is a projective $4$-design, then it is automatically a projective $5$-design. 
This is equivalent to the statement that $a_{4,4} = a_{5,5}=2$ in \cref{empl:Erinaceus} in this paper. 
So the validity of Conjecture 2 was proved.
The proof goes parallel with Nebe-Rains-Sloane's proof in \cite{MR1845897}. 

The aim of design theory is to approximate a space by its finite subset. 
There have been numerous study of designs on intervals, spheres, and $\binom{X}{k}$, namely the $k$-subsets of a $v$-element set $X$ \cite{MR3594369,MR2246267}. 
These designs are useful in areas such as numerical computation and experiment design. 
Experiments and engineering related to quantum physics raise the need for approximating the space of unitary groups and complex spheres. 
A unitary $t$-design is a subset $X$ of the unitary group $U(d)$ such that the averaging of every function $f \in \Hom_{(t,t)}(U(d))$ over $X$ is equal to that over $U(d)$. 
Here $\Hom_{(t,t)}(U(d))$ is the space of homogeneous complex conjugate polynomials in the entries of the unitary matrix as well as their complex conjugates that are both of degree $t$. 
Similarly a projective $t$-design is a subset of the complex sphere with functions ranging from $\Hom_{t,t}(\CC^d)$, homogeneous complex conjugate polynomials on complex sphere. 
Several such designs are known \cite{MR4051739,BNOZ,bannai2020,MR2529619}. 
The complex Clifford group, which is relatively easy to implement in quantum physics, outstands as an infinite family of unitary $3$-designs \cite{Webb:2016:CGF:3179439.3179447}. 
It is pointed out in that the complex Clifford group fails gracefully to be a unitary $4$-design, and projective $5$-designs may come for free from projective $4$-designs by the complex Clifford group \cite{Zhu2016Sep}. 

The invariants of (complex) Clifford group of genus $m$ were characterized by Runge's theorem \cite{MR1207281,MR1339948,MR1368288}. 
It says that the space of polynomial invariants is spanned by the genus-$m$ complete weight enumerators of binary (doubly) even self-dual codes. 
This relates the Clifford group to coding theory, which is another prosperous area since Shannon introduces the concept of information entropy \cite{MR26286}. 
Indeed Runge obtained the above result through the study of Siegel modular form. 
The automorphism group of the Barnes-Wall lattice is an index $2$ subgroup of the real Clifford group \cite{MR0142666,MR0125874,MR148736}. 
Nebe-Rains-Sloane tried to establish the parallel theory between self-dual codes and unimodular lattices, and they propose the Weight Enumerator Conjecture for finite form ring in their book \cite{MR2209183}. 
The result in the present paper can basically be regarded as giving another special case of the Weight Enumerator Conjecture.
\section{Complex Clifford group}

We basically follow the definitions and notation by Nebe-Rains-Sloane in \cite{MR1845897} and \cite{MR2209183}.

\begin{definition}
	The complex Clifford group of genus $m$, denoted by $\mathcal{X}_m$, is generated by the following elements of $U(2^m)$. 
	Let $e_v$, $v \in \mathbb{F}_2^m$ be an orthonormal basis of $\mathbb{C}^{2^m}$. 
	\begin{enumerate}
		\item Diagonal elements $d_S$ defined by $d_S(e_v) = i^{S[v]} e_v$ for every symmetric integral matrix $S$ of size $m \times m$ and every $v \in \mathbb{F}_2^m$. 
		Here $S[v] = v^T S v$ where the entries of $v$ are regarded as integers.
		\item Permutation elements $m_{g,v'}$ defined by $m_{g,v'}(e_v) = e_{gv+v'}$ for every $(g,v') \in \GL(m,2) \ltimes \FF_2^m= \AGL(m,2)$ and every $v \in \mathbb{F}_2^m$. 
		\item Hadamard element $h \otimes I_2 \otimes \cdots \otimes I_2 \in U(2)^{\otimes m}$, where $h = \frac{1}{\sqrt{2}} \begin{bmatrix}
			1 & 1 \\
			1 & -1
		\end{bmatrix}$.
	\end{enumerate}
\end{definition}

\begin{remark}
	Let $\sigma_x = \begin{bmatrix}
		0 & 1 \\
		1 & 0
	\end{bmatrix}$, $\sigma_y = \begin{bmatrix}
		0 & -i \\
		i & 0
	\end{bmatrix}$ and $\sigma_z = \begin{bmatrix}
		1 & 0 \\
		0 & -1
	\end{bmatrix}$ be the Pauli operators. 
	Let $\phi = \begin{bmatrix}
		1 & 0 \\
		0 & i
	\end{bmatrix}$ be the $\pi/2$-phase gate.
	We denote by $E_{i,j}$ the matrix unit whose $(i,j)$-entry is $1$. 
	Then another set of generators of $\mathcal{X}_m$ is
	$\sigma_x \otimes I_2 \otimes \cdots \otimes I_2$, 
	$h \otimes I_2 \otimes \cdots \otimes I_2$, 
	$\phi \otimes I_2 \otimes \cdots \otimes I_2$,
	$d_{S_{1,2}}$ and $\GL(m,2)$, where $S_{1,2} = E_{1,2} + E_{2,1}$. 
	Note that the generator 
	$\phi \otimes I_2 \otimes \cdots \otimes I_2$ is exactly $d_{E_{1,1}}$.
\end{remark}

\section{Weight enumerators}

\begin{definition}
	Let $q$ a be a prime power and $N_1, N_2$ non-negative integers.
	A \emph{linear code} $C$ of length $(N_1,N_2)$ over $V=\FF_q$, or a code for simplicity, is a subspace of $V^{N_1+N_2}$. 
	Each element $c$ of a code $C$ is called a \emph{codeword}. 
\end{definition}

In particular, a code over $\FF_2$ is called a binary code. 
We focus mostly on codes over $\FF_2$ or $\FF_{2^m}$.

\begin{definition} \label{def:clairvoyance}
	Let $C$ be a linear code over $\FF_q$ of length $(N_1,N_2)$. 
	For two codewords $x = (x_1, \ldots, x_{N_1 + N_2})$ and $y = (y_1, \ldots, y_{N_1 + N_2})$, the bilinear form $\beta(x,y)$ is defined by $\beta(x,y) = x_1 y_1 + \cdots + x_{N_1} y_{N_1} - x_{N_1+1} y_{N_1+1} - \cdots - x_{N_1 + N_2} y_{N_1+N_2} \in \FF_q$. 
	We call $C$ \emph{self-orthogonal} if $C \subseteq C^\perp$. 
	And we call $C$ \emph{self-dual} if $C = C^\perp$. 
	Here $$C^\perp = \set{c' \in \FF_q^{N_1+N_2} : \beta(c,c') = 0, \forall c \in C}.$$
\end{definition}

\begin{remark}
	This bilinear form coincides with the inner product when the field is  $\mathbb{F}_2$. 
\end{remark}

\begin{definition}
	Let $C$ be a binary linear code of length $(N_1,N_2)$. 
	For each codeword $c = (c_1, \ldots, c_{N_1 + N_2})$, we define its weight by $wt(c) = c_1 + \cdots + c_{N_1} - c_{N_1+1} - \cdots - c_{N_1 + N_2} \in \mathbb{Z}$. 
	We call $C$ \emph{even} if the weight of each codeword is an even number. 
	And we call $C$ \emph{doubly-even} if the weight of each codeword is divisible by $4$.
\end{definition}

\begin{remark}
	Usually we think of the weight of a codeword as an integer. 
	Again we regard elements of $\FF_p$ as elements of $\ZZ$. 
	Note that the classical binary linear code corresponds to the binary linear code here with $N_2 = 0$. 
\end{remark}

\begin{definition}
	The \emph{full weight enumerator} of a code $C \leq V^{N_1+N_2}$ is the element 
	\[
		\fwe(C) := \sum_{c \in C} e_c
	\]
	in the group algebra $\CC[V^{N_1+N_2}]$ where $e_c$ is regarded as a symbol.
\end{definition}

\begin{definition}
	The \emph{genus-$m$ full weight enumerator} of a code $C \leq V^{N_1+N_2}$ is the element 
	\[
		\fwe_m(C) := \sum_{c^1, \ldots, c^m \in C} e_{c^1, \ldots, c^m} 
	\]
	in the group algebra $\CC[V^{mN_1+mN_2}] \cong \otimes^m \CC[V^{N_1+N_2}]$.
\end{definition}

\begin{definition} \label{def:tarlike}
	The \emph{complete conjugate weight enumerator} $\ccwe(C)$ of a code $C \leq V^{N_1+N_2}$ is the projection under $\pi$ of the full weight enumerator of $C$ to the space $\CC[x_f, \bar{x}_f | f \in V]$, where $\pi$ is the mapping defined by $e_c \mapsto x_{c_1} \cdots x_{c_{N_1}} \overline{x_{c_{N_1+1}}} \cdots \overline{x_{c_{N_1+N_2}}}$ for $c = (c_1, \ldots, c_{N_1+N_2})$. 
	In other words
	\[ \ccwe(C) = \pi(\fwe(C)).\]
\end{definition}

In particular, for a binary code $C$ of length $(N_1,N_2)$ the complete conjugate weight enumerator of $C(m)$ is a homogeneous polynomial in $2^m$ variables of degree $N_1$ in $x_f$ and of degree $N_2$ in their complex conjugates. 
Such kind of polynomial are called multivariate conjugate complex polynomial (sometimes abbreviated as conjugate polynomial).

\begin{remark} \label{rmk:synapticulum}
	Let $C \leq V^{N_1+N_2}$ be a linear code. 
	For $m \in \mathbb{N}$, let $C(m) := C \otimes_{\mathbb{F}_q} \mathbb{F}_{q^m}$ be the extension of $C$ to a code over the field $\mathbb{F}_{q^m}$. 
	The isomorphism $V^{N_1+N_2} \otimes_{\FF_q} \FF_{q^m} \cong V^{mN_1+mN_2}$ gives us $\fwe_m(C) = \fwe(C(m))$. 
	The element in $C(m)$ can be represented as an $m \times (N_1 + N_2)$ matrix $M$ with the rows being the elements of $C$. 
	The first $N_1$ columns of $M$ contribute to the $2^m$ variables and the last $N_2$ columns contribute to their complex conjugates.
\end{remark}

\begin{theorem}[{Generalized MacWilliams identity \cite[Example 2.2.6]{MR2209183}}]
	Let $\beta \in Bil(V, \QQ/\ZZ)$ be a nonsingular bilinear form. 
	Then for any code $C \leq V$, the full weight enumerator of $C^\perp = C^{\perp,\beta}$ is given by 
	\[
		\fwe(C^\perp) = \frac{1}{\abs{C}} \sum_{w \in V} \sum_{v \in C} \exp(2\pi i \beta(w,v)) e_w.
	\]
	In other words, the full weight enumerator of $C^\perp$ can be obtained by changing $e_v$ to $\sum_{v \in C} \exp(2\pi i \beta(w,v)) e_w$ in the full weight enumerator of $C$, divided by $\abs{C}$
\end{theorem}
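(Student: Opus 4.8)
The plan is to expand the right-hand side and collapse it using the character theory of the finite abelian group $V$. Fix $w \in V$ and look at the inner sum $\sum_{v \in C}\exp(2\pi i\,\beta(w,v))$. Since $\beta$ is $\ZZ$-bilinear and $\exp(2\pi i\,\cdot)$ is well defined on $\QQ/\ZZ$, the map $\chi_w\colon v \mapsto \exp(2\pi i\,\beta(w,v))$ is a group homomorphism from $C$ into the group of roots of unity in $\CC^\times$; that is, $\chi_w$ is a (linear) character of $C$.

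First I would record the orthogonality relation for characters of a finite abelian group: for any character $\chi$ of $C$,
\[
	\sum_{v \in C}\chi(v) = \begin{cases} \abs{C}, & \chi \text{ trivial},\\ 0, & \chi \text{ nontrivial},\end{cases}
\]
proved in one line by choosing $v_0$ with $\chi(v_0)\neq 1$ and noting $\chi(v_0)\sum_{v\in C}\chi(v) = \sum_{v\in C}\chi(v_0 v) = \sum_{v\in C}\chi(v)$. Next I would identify the trivial-character locus: $\chi_w$ is trivial exactly when $\beta(w,v) = 0$ in $\QQ/\ZZ$ for all $v\in C$, i.e.\ exactly when $w \in C^\perp$ (matching \cref{def:clairvoyance}, using symmetry of $\beta$ or the corresponding left/right convention for the dual). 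Substituting,
\[
	\frac{1}{\abs{C}}\sum_{w\in V}\Biggl(\sum_{v\in C}\exp(2\pi i\,\beta(w,v))\Biggr)e_w = \frac{1}{\abs{C}}\sum_{w\in C^\perp}\abs{C}\,e_w = \sum_{w\in C^\perp}e_w = \fwe(C^\perp),
\]
which is the asserted identity; the closing rephrasing in terms of ``replacing $e_v$'' is then just bookkeeping.

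I do not anticipate a genuine obstacle: this is the classical MacWilliams transform transplanted to the abstract finite-form-ring setting, and it is recorded as Example 2.2.6 of \cite{MR2209183}. The only points that deserve attention are (i) checking that $\exp(2\pi i\,\beta(w,v))$ is legitimate for $\beta$ valued in $\QQ/\ZZ$, hence that $\chi_w$ really is a $\CC^\times$-valued character, and (ii) pinning down the trivial locus as $C^\perp$ rather than its transpose-dual. Nonsingularity of $\beta$ is in fact not used in the identity above; it is what secures $\abs{C}\cdot\abs{C^\perp} = \abs{V}$ and $(C^\perp)^\perp = C$, which are needed elsewhere (e.g.\ to speak of self-dual codes).
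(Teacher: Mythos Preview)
Your argument is correct and is the standard character-orthogonality proof of the MacWilliams identity. Note, however, that the paper does not supply its own proof of this statement: it is quoted as \cite[Example~2.2.6]{MR2209183} and used as a black box, so there is nothing to compare against. Your remarks on the role (or non-role) of nonsingularity and on the left/right convention for $C^\perp$ are also accurate.
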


\begin{theorem}[Analogue of {\cite[Theorem 3.5]{MR1845897}}] \label{thm:misshape}
	Let $C$ be a binary doubly-even self-dual code of length $(N_1,N_2)$. 
	\begin{enumerate}
		\item \label{itm:Iranic} The complex Clifford group $\mathcal{X}_m$ fixes the full weight enumerator $\fwe(C(m))$. 
		\item The complex Clifford group $\mathcal{X}_m$ fixes the complete conjugate weight enumerator $\ccwe(C(m))$. 
	\end{enumerate}
\end{theorem}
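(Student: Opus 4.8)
\emph{Proof sketch.} The plan is to mimic the proof of \cite[Theorem 3.5]{MR1845897}. Since $\set{u\in U(2^m) : u\cdot\fwe(C(m))=\fwe(C(m))}$ is a subgroup of $U(2^m)$, it suffices to prove part (1) for each of the three families of generators of $\mathcal{X}_m$: the affine permutation elements $m_{g,v'}$, the diagonal elements $d_S$, and the partial Hadamard element $h\otimes I_2^{\otimes(m-1)}$. Assertion (2) then follows for free, because the projection $\pi$ of \cref{def:tarlike} is the natural $U(2^m)$-equivariant symmetrization map from $\bigl(\CC^{2^m}\bigr)^{\otimes N}$ onto $\mathrm{Sym}^{N_1}(\CC^{2^m})\otimes\mathrm{Sym}^{N_2}(\CC^{2^m})$ (with $\mathcal{X}_m$ acting through the conjugate representation on the last $N_2$ tensor slots, matching the variables $\overline{x_f}$); thus $\pi$ intertwines the two $\mathcal{X}_m$-actions, and $\ccwe(C(m))=\pi(\fwe(C(m)))$ is fixed as soon as $\fwe(C(m))$ is. Throughout I use the picture of \cref{rmk:synapticulum}: with $N:=N_1+N_2$, a codeword of $C(m)$ is an $m\times N$ matrix $M$ whose rows $r_1,\dots,r_m$ all lie in $C$; under $\CC[V^{mN}]\cong\bigotimes_{j=1}^{N}\CC[\FF_2^m]$ the symbol $e_M$ equals $e_{v_1}\otimes\cdots\otimes e_{v_N}$ with $v_j\in\FF_2^m$ the $j$-th column of $M$; and $g\in\mathcal{X}_m$ acts on this tensor product as $g^{\otimes N_1}\otimes\bar g^{\otimes N_2}$.

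A permutation element $m_{g,v'}$ is a real $0/1$ matrix, so its action and its conjugate action coincide, and it sends $e_M\mapsto e_{M'}$ where the $j$-th column of $M'$ is $gv_j+v'$; equivalently the $k$-th row of $M'$ is $\sum_{l}g_{kl}r_l+v'_k\mathbf 1$ with $\mathbf 1$ the all-ones word of length $N$. Since $C$ is doubly-even it is even, hence $\mathbf 1\in C^{\perp}=C$, so every row of $M'$ lies in $C$; as $g\in\GL(m,2)$ is invertible, $M\mapsto M'$ is a bijection of $C(m)$, so $m_{g,v'}$ permutes the summands of $\fwe(C(m))=\sum_{M\in C(m)}e_M$ and fixes it. A diagonal element $d_S$ multiplies $e_M$ by $i^{\,\varepsilon(M)}$; writing $r_{kj}$ for the $j$-th entry of $r_k$ (so $v_j=(r_{1j},\dots,r_{mj})^{T}$), and taking the sign on the last $N_2$ slots into account via $\overline{i^{k}}=i^{-k}$, one finds, using $r_{kj}^{2}=r_{kj}$ and the symmetry of $S$,
\[
  \varepsilon(M)=\sum_{j=1}^{N_1}S[v_j]-\sum_{j=N_1+1}^{N}S[v_j]
  =\sum_{k}S_{kk}\,\wt(r_k)+2\sum_{k<l}S_{kl}\Bigl(\sum_{j\le N_1}r_{kj}r_{lj}-\sum_{j>N_1}r_{kj}r_{lj}\Bigr).
\]
As $C$ is doubly-even, each $\wt(r_k)\equiv 0\pmod 4$; as $C$ is self-orthogonal, each bracketed sum is $\equiv\beta(r_k,r_l)\equiv 0\pmod 2$. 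Hence $\varepsilon(M)\equiv 0\pmod 4$ for every $M\in C(m)$, so $d_S$ fixes $\fwe(C(m))$. This is the step that consumes both the doubly-even and the self-dual hypotheses.

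It remains to treat $h\otimes I_2^{\otimes(m-1)}$, again a real matrix, by genus reduction. Regrouping the tensor factors of $\CC[V^{mN}]$ by \emph{layer} rather than by \emph{position} identifies $\fwe(C(m))=\fwe_m(C)$ with $\fwe(C)^{\otimes m}\in\bigl(\CC[\FF_2^{N}]\bigr)^{\otimes m}$, and under this identification $h\otimes I_2^{\otimes(m-1)}$ acts as $h^{\otimes N}\otimes\Id^{\otimes(m-1)}$, since $h$ occupies the first qubit of each of the $N$ positions. So it is enough that $h^{\otimes N}$ fixes the genus-$1$ enumerator $\fwe(C)\in\CC[\FF_2^{N}]$. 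But $h^{\otimes N}e_c=2^{-N/2}\sum_{c'\in\FF_2^{N}}(-1)^{\langle c,c'\rangle}e_{c'}$, and over $\FF_2$ the form $\langle\,,\,\rangle$ coincides with $\beta$, so the Generalized MacWilliams identity gives $h^{\otimes N}\fwe(C)=\frac{|C|}{2^{N/2}}\fwe(C^{\perp})=\fwe(C)$, using $C^{\perp}=C$ and $|C|=2^{N/2}$ (a self-dual binary code has even length $N$ and dimension $N/2$). Having verified all three generators, part (1) is proved, and assertion (2) follows as explained.

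I expect the main obstacle to be organizational rather than conceptual: keeping the two tensor decompositions of $\CC[V^{mN}]$ apart — the by-position one, on which $\mathcal{X}_m$ acts, and the by-layer one, in which $\fwe_m(C)=\fwe(C)^{\otimes m}$ — and checking that the complex conjugation imposed on the last $N_2$ coordinates never interferes, which it does not because all three generators can be taken real and the phase computation for $d_S$ is symmetric under $i\mapsto\bar i$ in exactly the way the self-orthogonality of $C$ guarantees.
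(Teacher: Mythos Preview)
Your proof is correct and follows essentially the same approach as the paper: reduce (2) to (1) via the $\mathcal{X}_m$-equivariance of $\pi$, then verify (1) on generators, using $\mathbf 1\in C$ for the affine permutations, the doubly-even and self-orthogonal conditions for the diagonal elements, and the MacWilliams identity together with a genus reduction for the Hadamard generator. The only cosmetic difference is that the paper checks the alternative generating set $\{\sigma_x\otimes I,\;h\otimes I,\;\phi\otimes I,\;d_{S_{1,2}},\;\GL(m,2)\}$ from the Remark, whereas you work directly with all $m_{g,v'}$ and all $d_S$ via a single phase computation; one small slip in your closing commentary is the phrase ``all three generators can be taken real'' --- the $d_S$ are not real, but your actual argument for them (the sign in $\varepsilon(M)$ on the last $N_2$ columns) handles the conjugation correctly.
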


\begin{proof}
	Note that every codeword is orthogonal to itself, hence orthogonal to the all-one codeword.
	Since $C$ is doubly-even and self-dual, the all-one codeword in contained in $C$. 
	Therefore the difference $N_1 - N_2$ is necessarily a multiple of $4$. 
	The complex Clifford group $\mathcal{X}_m$ acts on $\CC[V^{N_1+N_2}] \cong \otimes^{N_1+N_2} (\CC^{2^m})$ diagonally. 
	Keep in mind that the action at the last $N_2$ entries comes with a complex conjugation. 
	Since the action commutes with the projection $\pi$, by \cref{rmk:synapticulum} we only need to prove the first statement. 
	It is enough to consider the generators of $\mathcal{X}_m$.

	The generators 
	$\sigma_x \otimes I_2 \otimes \cdots \otimes I_2$, 
	$h \otimes I_2 \otimes \cdots \otimes I_2$,
	$\phi \otimes I_2 \otimes \cdots \otimes I_2$
	are of the form 
	$u \otimes I_2 \otimes \cdots \otimes I_2$. 
	It suffices to consider $m=1$ for these generators. 
	The matrix $\sigma_x$ acts as $(\otimes^{N_1} \sigma_x) \otimes (\otimes^{N_2} \overline{\sigma_x})$ on $\otimes^{N_1+N_2} (\CC^2)$. 
	It maps a codeword $c = (c_1, \ldots, c_{N_1+N_2})$ to $c + \bm{1}$, where $\bm{1}$ is the all-one vector. 
	Since $C$ is self-dual, thus $\bm{1} \in C$. 
	So $\sigma_x$ permutes the codewords and hence preservers the full weight enumerator. 
	The matrix $\phi$ maps $e_c$ to $i^{\wt(c)}e_c$, which is equal to $e_c$ since $C$ is doubly-even. 
	Finally the MacWilliams identity for full weight enumerators guarantees the matrix $h$ preservers $\fwe(C(m))$. 

	The generator $d_{S_{1,2}}$ only occurs for $m \geq 2$. 
	Still we only need to consider the case $m=2$ for this generator. 
	For a pair of codewords $c = (c_1, \ldots, c_{N_1+N_2})$ and $c' = (c_1', \ldots, c_{N_1+N_2}')$, the matrix $d_{S_{1,2}}$ maps $e_{c,c'}$ to $i^{2\beta(c,c')} e_{c,c'}$, which is equal to $e_{c,c'}$ since $C$ is self-dual. 
	So the generator $d_{S_{1,2}}$ preservers $\fwe(C(m))$ as well. 

	Finally the generators $\GL(m,2)$ permute the elements of $\FF_2^m$. 
	Let $M$ be the $m \times (N_1 + N_2)$ matrix represent a codeword in $C(m)$ and let $g$ be an element of $\GL(m,2)$. 
	Then $m_g(e_M) = e_{gM}$ which is still an element in $C(m)$. 
	Since $g$ is invertible, the matrix $m_g$ only permutes the codewords. 
	Hence these generators fix the full weight enumerators of $C(m)$.
\end{proof}

\section{\texorpdfstring{The ring of conjugate invariants of $\mathcal{X}_m$}{The ring of conjugate invariants of X\_m}}  \label{sec:demibombard}

\begin{definition}
	A conjugate polynomial $p$ in $2^m$ variables is called a \emph{complex Clifford invariant of genus $m$} if it is invariant under the complex Clifford group $\mathcal{X}_m$. 
	In particular, it is called a \emph{parabolic invariant} if it is invariant under the parabolic subgroup $P$ generated by the diagonal elements and permutation elements of $\mathcal{X}_m$, and a \emph{diagonal invariant} if it is invariant under the diagonal elements of $\mathcal{X}_m$.
\end{definition}

\begin{lemma}[Analogue of {\cite[Lemma 4.2]{MR1845897}}]
	A conjugate polynomial $p$ is a diagonal invariant if and only if all of its monomials are diagonal invariants. 
\end{lemma}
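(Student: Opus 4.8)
The plan is to mimic the proof of \cite[Lemma 4.2]{MR1845897}, using the fact that the diagonal subgroup of $\mathcal{X}_m$ acts on monomials in a way that keeps track of a fine combinatorial statistic. First I would describe how the diagonal generators act on a single monomial. A monomial in the variables $\{x_f, \overline{x_f} : f \in \FF_2^m\}$ is determined (up to scalar) by the \emph{composition}, i.e. the function $\alpha : \FF_2^m \to \ZZ_{\geq 0}$ recording, for each $f$, the exponent of $x_f$, together with the analogous function for the conjugate variables; equivalently by the multiset of labels appearing. Under $d_S$ the monomial $\prod_f x_f^{\alpha(f)} \prod_f \overline{x_f}^{\beta(f)}$ is multiplied by $i^{\sum_f \alpha(f) S[f] - \sum_f \beta(f) S[f]}$ (the minus sign coming from complex conjugation on the last block of tensor factors), and under a permutation element $m_{g,v'}$ it is sent to the monomial with composition $\alpha \circ (x \mapsto g^{-1}x + \ldots)$, a genuine permutation of the monomials.

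The key step is then the standard linear-algebra argument: if $p = \sum_j c_j \mu_j$ is the decomposition of a diagonal invariant into distinct monomials $\mu_j$, then since each diagonal generator acts on each $\mu_j$ by a scalar (a power of $i$), and the $\mu_j$ are linearly independent, applying $d_S$ to $p = d_S(p)$ forces $c_j (\chi_S(\mu_j) - 1) = 0$ for every $j$ and every symmetric integral $S$, where $\chi_S(\mu_j)$ is the scalar by which $d_S$ scales $\mu_j$. Hence for each $j$ with $c_j \neq 0$ we get $\chi_S(\mu_j) = 1$ for all $S$, i.e. $\mu_j$ is itself a diagonal invariant; conversely if every monomial of $p$ is a diagonal invariant then $p$ is too. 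One should note that it suffices to check invariance under the diagonal generators $d_S$ (rather than the full parabolic $P$), matching the statement. The only mild subtlety is that a diagonal invariant is by definition invariant only under the $d_S$, so we do not need the permutation elements here at all — they would merely permute monomials, which is consistent but irrelevant to this particular lemma.

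I expect the main (very minor) obstacle to be purely bookkeeping: writing down cleanly the scalar $\chi_S(\mu)$ for a monomial $\mu$ in the mixed variables $x_f, \overline{x_f}$, being careful that the contribution from a conjugate variable $\overline{x_f}$ carries the opposite sign in the exponent of $i$, and observing that this character-like quantity is multiplicative over the factors of $\mu$ so that ``$\mu$ is fixed by every $d_S$'' is a well-defined condition on the monomial alone. Once that is set up, the linear independence of distinct monomials in the polynomial ring $\CC[x_f, \overline{x_f} \mid f \in \FF_2^m]$ does all the work, exactly as in Nebe--Rains--Sloane.
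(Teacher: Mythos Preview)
Your argument is correct: each diagonal generator $d_S$ multiplies every monomial by a scalar (a power of $i$, with the sign in the exponent flipped on the conjugate variables), so by linear independence of distinct monomials in $\CC[x_f,\overline{x_f}\mid f\in\FF_2^m]$ the invariance of $p$ forces the invariance of each monomial appearing in it. The paper itself states this lemma without proof, so there is nothing further to compare; your write-up is exactly the standard justification the paper omits.
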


Let $M$ be an $m \times (N_1 + N_2)$ matrix over $\FF_2$. 
We can associate it to a monic conjugate monomial $\nu_M \in \CC[x_f, \overline{x_f} : f \in \FF_{2^m}]$ by taking the product of variables corresponding to its columns with complex conjugate at the last $N_2$ columns. 

\begin{lemma}[Analogue of {\cite[Theorem 4.3]{MR1845897}}]
	A monoic conjugate monomial $\nu_M$ is a diagonal invariant if and only if the rows of $M$ are orthogonal and doubly-even as codewords. 
\end{lemma}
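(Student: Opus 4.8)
The plan is to unwind both directions by directly computing how a diagonal element $d_S$ acts on the monomial $\nu_M$, and then to reduce the condition ``$d_S \nu_M = \nu_M$ for all symmetric integral $S$'' to a pair of arithmetic conditions on the rows of $M$. First I would fix notation: write $r_1, \ldots, r_m \in \FF_2^{N_1+N_2}$ for the rows of $M$, and for each column index $j \in \{1, \ldots, N_1+N_2\}$ let $v^{(j)} \in \FF_2^m$ be the $j$-th column of $M$, so that $\nu_M = \prod_{j=1}^{N_1} x_{v^{(j)}} \cdot \prod_{j=N_1+1}^{N_1+N_2} \overline{x_{v^{(j)}}}$. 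Recall $d_S(e_v) = i^{S[v]} e_v$ where $S[v] = v^T S v$ computed over $\ZZ$; because the action on the last $N_2$ tensor factors carries a complex conjugation, the variable $\overline{x_v}$ is multiplied by $\overline{i^{S[v]}} = i^{-S[v]}$. Hence $d_S \nu_M = i^{\,\epsilon_S(M)} \nu_M$ where
\[
\epsilon_S(M) = \sum_{j=1}^{N_1} S[v^{(j)}] - \sum_{j=N_1+1}^{N_1+N_2} S[v^{(j)}] \pmod 4.
\]
So $\nu_M$ is a diagonal invariant if and only if $\epsilon_S(M) \equiv 0 \pmod 4$ for every symmetric integral $S$.

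Next I would expand $\epsilon_S(M)$ bilinearly in the entries of $S$. Writing $S = (s_{kl})$ with $s_{kl} = s_{lk}$, we have $v^T S v = \sum_{k} s_{kk} v_k^2 + 2\sum_{k<l} s_{kl} v_k v_l = \sum_k s_{kk} v_k + 2 \sum_{k<l} s_{kl} v_k v_l$ (using $v_k \in \{0,1\}$ so $v_k^2 = v_k$). Substituting $v = v^{(j)}$, the $k$-th coordinate of $v^{(j)}$ is the $j$-th entry of the row $r_k$, i.e. $(v^{(j)})_k = (r_k)_j$. Therefore, collecting the coefficient of $s_{kk}$ gives $\sum_{j=1}^{N_1}(r_k)_j - \sum_{j=N_1+1}^{N_1+N_2}(r_k)_j = \wt(r_k)$ (the signed weight from the weight definition), and the coefficient of $s_{kl}$ for $k<l$ is $2\big(\sum_{j=1}^{N_1}(r_k)_j(r_l)_j - \sum_{j=N_1+1}^{N_1+N_2}(r_k)_j(r_l)_j\big) = 2\beta(r_k, r_l)$ where $\beta$ is the bilinear form of \cref{def:clairvoyance}. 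So
\[
\epsilon_S(M) = \sum_{k=1}^m s_{kk}\,\wt(r_k) + 2\sum_{1 \le k < l \le m} s_{kl}\,\beta(r_k, r_l) \pmod 4.
\]
Since the $s_{kk}$ and $s_{kl}$ range over all integers independently, $\epsilon_S(M) \equiv 0 \pmod 4$ for all $S$ if and only if $\wt(r_k) \equiv 0 \pmod 4$ for each $k$ (take $S = E_{kk}$) and $2\beta(r_k, r_l) \equiv 0 \pmod 4$, i.e. $\beta(r_k, r_l) \equiv 0 \pmod 2$, for each $k < l$ (take $S = E_{kl} + E_{lk}$). The first is exactly ``each row is doubly-even'' and the second is exactly ``the rows are pairwise orthogonal'' as codewords (over $\FF_2$ the bilinear form $\beta$ reduces to the inner product, as noted in the remark after \cref{def:clairvoyance}). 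Conversely, if both conditions hold then every term of $\epsilon_S(M)$ vanishes mod $4$, so $d_S \nu_M = \nu_M$ for all $S$. The diagonal subgroup of $\mathcal{X}_m$ is generated (up to scalars, which act trivially on a monic monomial) by the $d_S$, so this settles both directions.

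The main obstacle is purely bookkeeping: keeping the signs straight through the conjugation on the last $N_2$ tensor factors, and making sure the mod-$4$ reduction of $v^T S v$ correctly produces $\wt(r_k)$ rather than an unsigned count — that is, that the sign pattern in the weight (\cref{def:tarlike}-style signed weight) matches the sign pattern in the bilinear form $\beta$. Once the coefficient extraction is done carefully, the equivalence ``vanishing for all symmetric integral $S$'' $\Longleftrightarrow$ ``vanishing of each coefficient mod the appropriate modulus'' is immediate from the independence of the matrix entries $s_{kk}$ and $s_{kl}$. I would also remark that one does not need the permutation elements $m_{g,v'}$ here — they are relevant only for parabolic invariance — so the statement really is about the diagonal part alone, consistent with the preceding lemma reducing diagonal invariance of a polynomial to that of its monomials.
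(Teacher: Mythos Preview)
Your proof is correct and follows essentially the same approach as the paper: both compute the action of $d_S$ on $\nu_M$ and test it on the elementary symmetric matrices $E_{k,k}$ and $E_{k,l}+E_{l,k}$ to extract the doubly-even and orthogonality conditions on the rows of $M$. Your version is simply more explicit---you derive the general scalar $i^{\epsilon_S(M)}$ and expand it in the entries $s_{kl}$ before specializing, whereas the paper jumps directly to the two generating families of $S$---but the underlying computation and the key observations (that $d_{E_{k,k}}$ contributes $i^{\wt(r_k)}$ and $d_{E_{k,l}+E_{l,k}}$ contributes $i^{2\beta(r_k,r_l)}$) are identical.
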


\begin{proof}
	It suffices to consider the action of elements $d_{E_{k,k}}$ and $d_{E_{k,l}+E_{l,k}}$ for $1 \leq k \neq l \leq m$. 
	The effect of the action of $d_{S_{k,k}}$ is to multiply $\nu_M$ by $i^{\wt(c^k)}$ where $\wt(c^k)$ is the weight of the $k$-th row of $M$. 
	This demands that each row of $M$ is doubly-even.
	The effect of the action of $d_{S_{k,l}}$ is to multiply $\nu_M$ by $i^{2\beta(c^k,c^l)}$. 
	This demands the rows of $M$ to be orthogonal.
\end{proof}

For $(g,v') \in \GL(m,2) \ltimes \FF_2^m = A\GL(m,2)$, its action on $\nu_M$ is given by $m_{g,v'}(\nu_M) = \nu_{g M + v'\bm{1}^T}$. 
Then the orbit of $\nu_M$ under $A\GL(m,2)$ is a code $C$ containing $\bm{1}^T$ and of dimension at most $m+1$. 
This allows us to define a conjugate polynomial $\nu_m(C)$ for any binary code $C$. 
\[
	\nu_m(C) := 
		\sum_{\substack{M \in \FF_2^{m \times (N_1 + N_2)} \\ \Span\lrangle{M,\bm{1}^T} = C}} \nu_M
\]
Note that $\nu_m(C) = 0$ if $\bm{1}^T \notin C$ or $\dim(C) > m+1$.
In particular, if $C$ is doubly-even self-orthogonal, then $\nu_m(C)$ is a parabolic invariant.

\begin{lemma}[Analogue of {\cite[Theorem 4.4]{MR1845897}}]
	A basis of the space of parabolic invariants of degree $(N_1,N_2)$ is given by conjugate polynomials of the form $\nu_m(C)$ where $C$ ranges over equivalence classes of binary self-orthogonal codes of length $(N_1, N_2)$ containing $\bm{1}^T$ and of dimension at most $m+1$.
\end{lemma}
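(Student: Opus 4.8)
The plan is to mirror the structure of Nebe--Rains--Sloane's proof of \cite[Theorem 4.4]{MR1845897}, adapting it to the conjugate setting. First I would observe that the space of diagonal invariants of degree $(N_1,N_2)$ has an explicit basis: by the two preceding lemmas, a monomial $\nu_M$ is a diagonal invariant precisely when the rows of $M$ form a doubly-even self-orthogonal configuration, so the diagonal invariants of degree $(N_1,N_2)$ are spanned by exactly those $\nu_M$ with $M$ an $m \times (N_1+N_2)$ matrix whose rows span a doubly-even self-orthogonal code. The parabolic invariants are the subspace of diagonal invariants that are in addition fixed by the permutation part $\AGL(m,2)$, so one obtains a basis of parabolic invariants by averaging: group the basic diagonal monomials $\nu_M$ into $\AGL(m,2)$-orbits and sum over each orbit. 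By the discussion immediately preceding the lemma, the orbit of $\nu_M$ consists of the $\nu_{M'}$ with $\Span\langle M', \bm 1^T\rangle$ equal to a fixed code $C = \Span\langle M, \bm 1^T\rangle$, so each orbit sum is exactly $\nu_m(C)$ for some binary code $C$ that contains $\bm 1^T$, is doubly-even self-orthogonal, and has dimension at most $m+1$; conversely every such $C$ arises this way and gives a nonzero $\nu_m(C)$.

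Second I would argue that the $\nu_m(C)$ so obtained are linearly independent and that the parameter really is the equivalence class of $C$. Linear independence is immediate because distinct codes $C$ give orbit sums supported on disjoint sets of monomials (the monomials $\nu_M$ with $M$ ranging over matrices of a fixed row-and-all-ones span), and these monomials are themselves linearly independent in $\CC[x_f, \overline{x_f} : f \in \FF_{2^m}]$. For the equivalence-class reduction: the action of $\GL(m,2)$ on the left of $M$ permutes the basic monomials, but this is already absorbed into the definition of $\nu_m(C)$, which depends only on the code $C = \Span\langle M, \bm 1^T\rangle$ and not on the choice of generating matrix $M$; what remains is that $\nu_m(C)$ and $\nu_m(C')$ are literally the same polynomial when $C$ and $C'$ are equivalent as codes --- but this is \emph{not} true in general, and in fact different codes in the same equivalence class give different (linearly independent) polynomials. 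So I would instead phrase the conclusion as: the full list of distinct $\nu_m(C)$, as $C$ ranges over \emph{all} doubly-even self-orthogonal codes of length $(N_1,N_2)$ containing $\bm 1^T$ of dimension $\le m+1$, is a basis --- and note that when counting the dimension it suffices to range over equivalence classes \emph{with multiplicity equal to the orbit size under the coordinate symmetry}, i.e. the wording in the lemma should be read as indexing a basis rather than asserting one representative per class. (If the intended statement is the weaker dimension count, I would present the bijection between basis elements and pairs (equivalence class, embedding) accordingly.)

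Third, I would verify the two technical points that make the averaging argument work. One: every parabolic invariant is a diagonal invariant (trivial, since the diagonal elements are among the generators of $P$), hence lies in the span of the basic monomials $\nu_M$; and being fixed by $\AGL(m,2)$, it is a linear combination of the orbit sums $\nu_m(C)$ --- this gives spanning. Two: one must check that $\nu_m(C)$ is genuinely $\AGL(m,2)$-invariant, i.e. that $\AGL(m,2)$ permutes within a single orbit the set $\{M : \Span\langle M,\bm 1^T\rangle = C\}$; this follows because $m_{g,v'}$ sends $\nu_M$ to $\nu_{gM + v'\bm 1^T}$ and $\Span\langle gM + v'\bm 1^T, \bm 1^T\rangle = \Span\langle M, \bm 1^T\rangle$ since $g$ is invertible and $v'\bm 1^T$ lies in the row space already spanned by $\bm 1^T$. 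Also $\nu_m(C)$ must actually be a diagonal invariant, which needs every $M$ with $\Span\langle M,\bm 1^T\rangle = C$ to have doubly-even orthogonal rows; this holds because $C$ is doubly-even self-orthogonal, so every codeword (in particular every row of every such $M$) has weight divisible by $4$ and any two are orthogonal.

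The main obstacle I anticipate is the bookkeeping in the second paragraph: pinning down exactly in what sense $C$ ranges ``over equivalence classes'' so that the $\nu_m(C)$ form a basis rather than merely a spanning set or an overcount. The cleanest route is to first establish the basis indexed by \emph{all} such codes $C$ (where independence and spanning are both clean), and then, for the purpose of the applications (computing dimensions such as $a_{5,5}$), observe that the coordinate permutation group acts on this basis and one may organize the count by orbits; alternatively, if Nebe--Rains--Sloane's original convention is followed verbatim, one inherits whatever precise reading makes \cite[Theorem 4.4]{MR1845897} correct. Everything else --- the monomial independence, the orbit computation, the checks that $\nu_m(C)$ is a parabolic invariant --- is routine and parallels \cite{MR1845897} line by line.
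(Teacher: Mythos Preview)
The paper does not supply a proof for this lemma (it is stated as a direct analogue of \cite[Theorem 4.4]{MR1845897} and left without a proof environment), so there is no paper argument to compare against line by line. Your overall strategy---diagonal invariants are spanned by the admissible $\nu_M$, parabolic invariants are their $\AGL(m,2)$-orbit sums, and each orbit sum is a $\nu_m(C)$---is the standard one and is correct.

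However, your second paragraph contains a genuine error that leads you to misread the lemma. You assert that permutation-equivalent codes $C$ and $C'$ give \emph{different} polynomials $\nu_m(C)$ and $\nu_m(C')$, and you therefore propose to reindex the basis by all codes rather than by equivalence classes. This is backwards. The monomial $\nu_M$ depends only on the multiset of columns of $M$ in positions $1,\dots,N_1$ and the multiset of columns in positions $N_1{+}1,\dots,N_1{+}N_2$, because the product $x_{v_1}\cdots x_{v_{N_1}}\,\overline{x_{v_{N_1+1}}}\cdots\overline{x_{v_{N_1+N_2}}}$ is symmetric in each block of factors. Hence if $\sigma$ is a coordinate permutation respecting the $(N_1,N_2)$ split and $C' = \sigma(C)$, then $M \mapsto M\sigma$ is a bijection $\{M : \Span\langle M,\bm 1^T\rangle = C\} \to \{M : \Span\langle M,\bm 1^T\rangle = C'\}$ preserving $\nu_M$, so $\nu_m(C) = \nu_m(C')$ identically. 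Conversely, any monomial determines the two column multisets and hence the equivalence class of $\Span\langle M,\bm 1^T\rangle$; so inequivalent $C$, $C'$ give $\nu_m(C)$, $\nu_m(C')$ supported on disjoint sets of monomials, which is what gives linear independence. Thus the indexing by equivalence classes in the lemma is exactly right, and your proposed reformulation would overcount. Once this is corrected, your spanning and independence arguments go through as written.
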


\begin{lemma}[Analogue of {\cite[Theorem 4.5]{MR1845897}}] \label{lem:Alebion}
	For any binary code $C$,
	\[
		\ccwe(C(m)) = \sum_{D \subseteq C} \nu_m(D).
	\]
\end{lemma}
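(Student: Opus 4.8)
The plan is to expand both sides as explicit finite $\CC$-linear combinations of the conjugate monomials $\nu_M$, $M\in\FF_2^{m\times(N_1+N_2)}$, and to match them by sorting the matrices $M$ according to the code they generate together with the all-ones row $\bm 1^T$. First I would unfold the left-hand side. By \cref{rmk:synapticulum}, $\ccwe(C(m))=\pi(\fwe(C(m)))=\pi(\fwe_m(C))$; since $\pi$ is linear and sends the term of $\fwe_m(C)$ indexed by $(c^1,\dots,c^m)\in C^m$ --- equivalently, the term of $\fwe(C(m))$ indexed by the codeword recorded by the matrix $M$ whose rows are $c^1,\dots,c^m$ --- to the monomial $\nu_M$, this yields
\[
  \ccwe(C(m))=\sum_{\substack{M\in\FF_2^{m\times(N_1+N_2)}\\ \text{every row of }M\text{ lies in }C}}\nu_M .
\]

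Next I would unfold the right-hand side from the definition $\nu_m(D)=\sum_{M:\,\Span\langle M,\bm 1^T\rangle=D}\nu_M$, giving $\sum_{D\subseteq C}\nu_m(D)=\sum_{M:\,\Span\langle M,\bm 1^T\rangle\subseteq C}\nu_M$, and then show the two sets of matrices agree. Every row of $M$ lies in $C$ iff $\Span\langle M\rangle\subseteq C$; using $\bm 1^T\in C$ (which is the case in the applications, e.g.\ $C$ self-dual), this is equivalent to $\Span\langle M,\bm 1^T\rangle\subseteq C$. Moreover $M$ has only $m$ rows, so every code $\Span\langle M,\bm 1^T\rangle$ contains $\bm 1^T$ and has dimension at most $m+1$; conversely, each subcode $D\subseteq C$ with $\bm 1^T\in D$ and $\dim D\le m+1$ is of this form (choose $m$ vectors of $D$ whose span together with $\bm 1^T$ is all of $D$, possible since $\dim(D/\langle\bm 1^T\rangle)\le m$). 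Hence the admissible $M$'s are partitioned as $\bigsqcup_{D}\{M:\Span\langle M,\bm 1^T\rangle=D\}$ over exactly those $D\subseteq C$ containing $\bm 1^T$ with $\dim D\le m+1$, summation of $\nu_M$ over each block returns $\nu_m(D)$, and every other $D\subseteq C$ contributes $\nu_m(D)=0$. Combining the two expansions gives $\ccwe(C(m))=\sum_{D\subseteq C}\nu_m(D)$.

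I expect the only real subtlety to be the bookkeeping around $\bm 1^T$: because the $\AGL(m,2)$-action translates by $v'\bm 1^T$, every code of the form $\Span\langle M,\bm 1^T\rangle$ automatically contains $\bm 1^T$, so the decomposition of $\ccwe(C(m))$ is naturally indexed by the subcodes of $C$ that contain $\bm 1^T$ and have dimension at most $m+1$; one has to check carefully that all the remaining subcodes of $C$ drop out through the vanishing of $\nu_m$ and that no matrix $M$ with rows in $C$ is missed or counted twice. Everything else is a formal re-indexing --- the representation-theoretic content (which monomials are diagonal or parabolic invariants) has already been isolated in the preceding lemmas, so no additional information about the Clifford group is needed here.
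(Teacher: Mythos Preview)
Your proof follows exactly the same route as the paper's: expand $\ccwe(C(m))$ as $\sum_M \nu_M$ over matrices $M$ with all rows in $C$, then partition these matrices according to $D=\Span\langle M,\bm 1^T\rangle$ to recover $\sum_{D\subseteq C}\nu_m(D)$. Your caution about needing $\bm 1^T\in C$ is well placed --- the paper tacitly assumes it when asserting that $D=\Span\langle M,\bm 1^T\rangle$ is a subcode of $C$, and in fact the identity fails as literally stated without that hypothesis, though every subsequent application of the lemma satisfies it.
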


\begin{proof}
	By definition,
	\[
		\ccwe(C(m)) = \sum_M \nu_M
	\]
	where $M$ ranges over $m \times (N_1 + N_2)$ with all rows in $C$. 
	Let $M$ be such a matrix. 
	Then $M$ uniquely determines a subcode $D=\Span\lrangle{M,\bm{1}^T}$ of $C$. 
	Therefore 
	\[
		\ccwe(C(m)) = \sum_{D \subseteq C} \sum_{\Span\lrangle{M,\bm{1}^T}=D} \nu_M = \sum_{D \subseteq C} \nu_m(D). \qedhere
	\]
\end{proof}

Indeed the complete conjugate weight enumerator of $C(m)$ is essentially the partial sum of the function $\nu_m$ in the subspace poset of binary codes. 
So we have the Möbius inversion formula. 

\begin{corollary} \label{coro:striker}
	For any binary code $D$,
	\[
		\nu_m(D) = \sum_{C \subseteq D} \ccwe(C(m)) \mu(C,D)
	\]
	where $\mu(C,D)$ is the Möbius function of the subspace poset of binary codes.
\end{corollary}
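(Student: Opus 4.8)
The statement to prove is \cref{coro:striker}, the Möbius inversion of \cref{lem:Alebion}. Since \cref{lem:Alebion} already establishes
\[
	\ccwe(C(m)) = \sum_{D \subseteq C} \nu_m(D)
\]
for every binary code $C$, the plan is to invoke the standard Möbius inversion formula on the poset of subspaces (subcodes) of a fixed ambient code, ordered by inclusion. The only genuine content is to check that the hypotheses of Möbius inversion are met: the poset must be locally finite, which is immediate here since every binary code is finite-dimensional and hence has only finitely many subcodes, so every interval $[C, D]$ is finite.

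First I would fix the ambient code to be $D$ itself (this is legitimate: although \cref{lem:Alebion} is phrased for all $C$, all the relevant $C$ in the sum range over $C \subseteq D$, so it suffices to work inside the finite lattice of subcodes of $D$). Then the identity from \cref{lem:Alebion}, specialized to each $C \subseteq D$, reads $\ccwe(C(m)) = \sum_{E \subseteq C} \nu_m(E)$, i.e. $\ccwe(\cdot(m))$ is the zeta-transform of $\nu_m$ on the interval $[\bm{0}, D]$ of the subspace poset. The Möbius inversion theorem for locally finite posets then gives back $\nu_m(D) = \sum_{C \subseteq D} \mu(C,D)\, \ccwe(C(m))$, where $\mu$ is the Möbius function of that poset. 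Since the Möbius function of an interval in the subspace lattice depends only on the interval up to isomorphism, and all such intervals embed inside the subspace poset of binary codes, referring to "the Möbius function of the subspace poset of binary codes" is unambiguous, and in fact $\mu(C,D)$ has the well-known closed form $(-1)^{k} 2^{\binom{k}{2}}$ with $k = \dim D - \dim C$ — though we do not need this explicit value here.

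There is essentially no obstacle: this is a direct application of a textbook result once the local finiteness is noted, and the bulk of the work was already done in proving \cref{lem:Alebion}. The one point deserving a sentence of care is the interplay between "all binary codes" (an infinite poset) and Möbius inversion (which requires local finiteness); I would address it by the remark above that everything takes place inside the finite sublattice of subcodes of the fixed code $D$. I would also note in passing that $\nu_m(D) = 0$ unless $D$ contains $\bm{1}^T$, which is consistent: if $\bm{1}^T \notin D$ then neither does any $C \subseteq D$ contain $\bm{1}^T$ in a way that affects the $\ccwe$ expansion, and the alternating sum collapses to zero accordingly.

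\begin{proof}
	Fix a binary code $D$ and work in the finite lattice $L_D$ of subcodes of $D$, ordered by inclusion; every interval in $L_D$ is finite since $D$ is finite-dimensional. By \cref{lem:Alebion}, for every $C \in L_D$ we have $\ccwe(C(m)) = \sum_{E \subseteq C} \nu_m(E)$, so the function $C \mapsto \ccwe(C(m))$ is the zeta-transform of the function $E \mapsto \nu_m(E)$ on $L_D$. Applying the Möbius inversion formula for locally finite posets yields
	\[
		\nu_m(D) = \sum_{C \subseteq D} \mu(C,D)\, \ccwe(C(m)),
	\]
	where $\mu$ is the Möbius function of $L_D$. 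Since the Möbius function of an interval in a subspace lattice depends only on that interval up to isomorphism, $\mu(C,D)$ agrees with the Möbius function of the corresponding interval in the subspace poset of all binary codes, which gives the asserted formula.
\end{proof}
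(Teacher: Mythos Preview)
Your proposal is correct and matches the paper's approach: the paper does not give a separate proof of \cref{coro:striker} at all, merely remarking that \cref{lem:Alebion} exhibits $\ccwe$ as the zeta-transform of $\nu_m$ on the subspace poset and then stating the Möbius inversion. Your write-up is in fact more careful than the paper's, since you explicitly address local finiteness by restricting to the finite lattice $L_D$.
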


\begin{theorem}[Analogue of {\cite[Theorem 4.6]{MR1845897}}] \label{thm:gluttonish}
	The space of parabolic invariants is spanned by the conjugate polynomials $\ccwe(C(m))$, where $C$ ranges over equivalence classes of binary self-orthogonal codes containing $\bm{1}^T$ and of dimension at most $m+1$.
\end{theorem}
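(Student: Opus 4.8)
The plan is to deduce the statement from the basis Lemma (the analogue of \cite[Theorem 4.4]{MR1845897}) together with \cref{lem:Alebion} and \cref{coro:striker}: passing from the family $\{\nu_m(C)\}$ to the family $\{\ccwe(C(m))\}$ is a triangular, hence invertible, change of basis on the (finite-dimensional) space of parabolic invariants of degree $(N_1,N_2)$. The one structural observation that makes everything fit is that the family of index codes is ``almost'' closed under passing to subcodes: self-orthogonality and the bound $\dim\le m+1$ are inherited by every subcode, and although containment of $\bm 1^T$ is not, this is harmless because $\nu_m(D)=0$ unless $\bm 1^T\in D$. Consequently, in any sum $\sum_{D\subseteq C}\nu_m(D)$ with $C$ self-orthogonal, $\bm 1^T\in C$, $\dim C\le m+1$, the nonzero summands are precisely the $\nu_m(D)$ with $D$ again in that family, and $\dim D\le\dim C$ with equality only when $D=C$.

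For the inclusion ``$\supseteq$'' I would argue that each proposed generator is a parabolic invariant: if $C$ is self-orthogonal, contains $\bm 1^T$ and has $\dim C\le m+1$, then by \cref{lem:Alebion} $\ccwe(C(m))=\sum_{D\subseteq C}\nu_m(D)$, and by the observation above every nonzero term $\nu_m(D)$ is one of the basis parabolic invariants furnished by the analogue of \cite[Theorem 4.4]{MR1845897}; hence $\ccwe(C(m))$ lies in the span of those, i.e. in the space of parabolic invariants.

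For the reverse inclusion it suffices to express each basis element $\nu_m(C)$ (with $C$ self-orthogonal, $\bm 1^T\in C$, $\dim C\le m+1$) through the $\ccwe(D(m))$ with $D$ in the same family, and I would do this by induction on $\dim C$. Rearranging \cref{lem:Alebion} gives
\[
	\nu_m(C)=\ccwe(C(m))-\sum_{\substack{D\subsetneq C\\ \bm 1^T\in D}}\nu_m(D),
\]
where, by the first paragraph, every code $D$ on the right is again self-orthogonal, contains $\bm 1^T$, and satisfies $\dim D<\dim C\le m+1$; the base case $C=\Span\lrangle{\bm 1^T}$ gives an empty sum since $\nu_m(\{0\})=0$. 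By the induction hypothesis each such $\nu_m(D)$ is a $\CC$-linear combination of $\ccwe(E(m))$'s with $E$ in the family, and $\ccwe(C(m))$ itself is of that form, so $\nu_m(C)$ is too. Equivalently, one can invoke \cref{coro:striker} directly: it writes $\nu_m(C)$ in terms of $\ccwe(C'(m))$ over all $C'\subseteq C$, and re-expanding the unwanted terms $\ccwe(C'(m))$ with $\bm 1^T\notin C'$ via \cref{lem:Alebion} produces only summands $\nu_m(E)$ with $\bm 1^T\notin E$, all of which vanish; so the unwanted terms contribute nothing.

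Putting the two inclusions together, the $\ccwe(C(m))$ over the stated equivalence classes of codes span the space of parabolic invariants; since they are indexed, up to equivalence, by exactly the same finite set of codes as the basis $\{\nu_m(C)\}$, and the transition matrix is unitriangular with respect to the partial order refining code dimension (coefficient of $\nu_m(C)$ in $\ccwe(C(m))$ equal to $1$, only strictly smaller-dimensional codes entering otherwise), they in fact form a basis. I expect the only genuinely delicate point to be the equivalence-class bookkeeping behind this unitriangularity — grouping the subcodes appearing in \cref{lem:Alebion} into equivalence classes and confirming that a code equivalent to $C$ and contained in $C$ must equal $C$, so that the diagonal coefficient is exactly $1$; the two inclusions themselves are essentially immediate once one exploits that $\nu_m$ is supported on codes containing $\bm 1^T$ and that the remaining two defining conditions pass to subcodes.
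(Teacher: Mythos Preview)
Your argument is correct and is exactly the paper's approach, only spelled out: the paper's proof is the two-line observation that subcodes of doubly-even self-orthogonal codes are again doubly-even self-orthogonal and that $\nu_m(D)=0$ when $\bm{1}^T\notin D$, after which \cref{lem:Alebion} and \cref{coro:striker} give the triangular passage between the $\nu_m$-basis and the $\ccwe$-family. Your induction and your unitriangularity remark make this explicit and even upgrade ``span'' to ``basis''.

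One caveat on your ``equivalently'' alternative: you claim that for $C'$ with $\bm{1}^T\notin C'$ the expansion of $\ccwe(C'(m))$ via \cref{lem:Alebion} has only vanishing summands, hence $\ccwe(C'(m))=0$. That is false (e.g.\ $C'=\{0\}$ gives $\ccwe(C'(m))=x_0^{N_1}\overline{x_0}^{\,N_2}\neq 0$). The point is that the proof of \cref{lem:Alebion} tacitly needs $\bm{1}^T\in C$, since otherwise $\langle M,\bm{1}^T\rangle$ need not be a subcode of $C$; correspondingly, the M\"obius inversion in \cref{coro:striker} should be taken over the sublattice of codes containing $\bm{1}^T$. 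Your primary induction argument never leaves that sublattice, so it is unaffected.
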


\begin{proof}
	Note that a subcode of a doubly-even self-orthogonal code is also doubly-even self-orthogonal and $\nu_m(D) = 0$ if $\bm{1}^T \notin D$. 
	The theorem follows from \cref{lem:Alebion,coro:striker}.
\end{proof}

Let $X_P$ be the operation of averaging over the parabolic subgroup $P$, namely $X_P = \frac{1}{\abs{P}} \sum_{g \in P} g$. 

\begin{lemma}[Analogue of {\cite[Lemma 4.7]{MR1845897}}] \label{lem:infinitesimally}
	For any binary doubly-even self-orthogonal code $C$ of length $(N_1,N_2)$ containing $\bm{1}$ and of dimension $(N_1+N_2)/2−r$,
	\begin{align*}
		&X_P (h \otimes I_2 \otimes \cdots \otimes I_2) \ccwe(C(m)) \\
		=& \frac{2^{m-r} - 2^r}{2^m - 1} \ccwe(C(m)) + \frac{2^{-r}}{2^m - 1} \sum_{\substack{C \subsetneq C' \subseteq C'^\perp  \\ [C' : C] = 2}} \ccwe(C'(m)),
	\end{align*}
	where the last summation is over all doubly-even self-orthogonal codes containing $C'$ containing $C$ to index $2$.
\end{lemma}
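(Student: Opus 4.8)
The plan is to follow the proof of \cite[Lemma 4.7]{MR1845897}, transcribed to the conjugate setting. Write $h_1:=h\otimes I_2\otimes\cdots\otimes I_2$. Since $h$ is a real matrix, the complex conjugation on the last $N_2$ tensor slots is immaterial: $h_1$ replaces every variable or conjugate variable indexed by $v=(v_1,v')\in\FF_2\times\FF_2^{m-1}$ by $\tfrac{1}{\sqrt2}\sum_{a\in\FF_2}(-1)^{av_1}$ times the (conjugate) variable indexed by $(a,v')$. I apply this to $\ccwe(C(m))=\sum_{M}\nu_M$ (the sum over $m\times(N_1{+}N_2)$ matrices whose rows lie in $C$) and sum the resulting geometric series over the first row of $M$ using $\sum_{c\in C}(-1)^{\beta(c,d)}=|C|\cdot[\,d\in C^\perp\,]$ (over $\FF_2$, $\beta$ is the ordinary inner product). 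The prefactor $|C|/2^{(N_1+N_2)/2}$ is exactly $2^{-r}$, giving
\[
  h_1\,\ccwe(C(m)) \;=\; 2^{-r}\sum_{M}\nu_M,
\]
the sum now over matrices whose first row lies in $C^\perp$ and whose remaining $m-1$ rows lie in $C$.

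Next I slice this sum according to the coset of $C$ in $C^\perp$ containing the first row. The trivial coset reproduces $\ccwe(C(m))$, which is already a parabolic invariant by \cref{thm:gluttonish}, hence fixed by $X_P$. For a nontrivial coset pick a representative $c^\perp$ and put $C':=\langle C,c^\perp\rangle$; since $\bm{1}\in C$ the Hamming weight of $c^\perp$ is even, so $C'$ is self-orthogonal, and since $C$ is doubly-even the residue $\wt(c^\perp) \bmod 4$ depends only on the coset, so $C'$ is doubly-even precisely when $\wt(c^\perp)\equiv 0 \pmod 4$. Let $\Psi_{c^\perp}$ denote the contribution of this coset. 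I average $\Psi_{c^\perp}$ first over the subgroup $P_{\mathrm{perm}}\le P$ generated by the permutation elements $m_{g,v'}$, i.e.\ over $\AGL(m,2)$: this group acts on the ``row pattern'' vector in $\FF_2^m$ (recording which rows lie in $c^\perp+C$) through $\GL(m,2)$, which is transitive on nonzero vectors, so the orbit of $\Psi_{c^\perp}$ consists of exactly the $2^m-1$ analogous sums indexed by the nonempty subsets of rows, and the total of all of these (including the empty subset) is $\ccwe(C'(m))$. Hence $X_{P_{\mathrm{perm}}}\Psi_{c^\perp}=\tfrac{1}{2^m-1}\bigl(\ccwe(C'(m))-\ccwe(C(m))\bigr)$.

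Then I use $X_P=X_PX_{P_{\mathrm{perm}}}$ (averaging over a subgroup first is absorbed) and expand $\ccwe(C'(m))=\sum_{D\subseteq C'}\nu_m(D)$ by \cref{lem:Alebion}. Each $\nu_m(D)$ with $D$ doubly-even self-orthogonal is a parabolic invariant, hence fixed by $X_P$; for any other $D$ (with $\bm{1}\in D$ and $\dim D\le m+1$, else $\nu_m(D)=0$) some diagonal generator $d_S$ scales $\nu_m(D)$ by a nontrivial fourth root of unity, so $X_{P_{\mathrm{diag}}}\nu_m(D)=0$ and therefore $X_P\nu_m(D)=0$. Thus $X_P\ccwe(C'(m))$ is the sum of $\nu_m(D)$ over the doubly-even self-orthogonal $D\subseteq C'$. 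If $\wt(c^\perp)\equiv 2\pmod 4$, every element of $C'\setminus C$ has weight $\equiv 2\pmod 4$, so every doubly-even subcode of $C'$ lies in $C$, $X_P\ccwe(C'(m))=\ccwe(C(m))$, and this coset contributes $0$; if $\wt(c^\perp)\equiv 0\pmod 4$, then $C'$ itself is doubly-even self-orthogonal, $X_P\ccwe(C'(m))=\ccwe(C'(m))$, and the coset contributes $\tfrac{1}{2^m-1}\bigl(\ccwe(C'(m))-\ccwe(C(m))\bigr)$. Adding the trivial-coset term, multiplying by $2^{-r}$, and collecting — the surviving cosets being in bijection with the index-$2$ doubly-even self-orthogonal overcodes $C'$ of $C$ — gives the asserted identity once the coefficient of $\ccwe(C(m))$ is evaluated by a direct count of those overcodes (equivalently, of the weight-divisible-by-$4$ cosets of $C$ in $C^\perp$).

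The step I expect to be the main obstacle is this final one: the diagonal-averaging bookkeeping — pinning down exactly which $\nu_m(D)$ survive $X_P$, checking that the surviving pieces reassemble into genuine $\ccwe$'s with multiplicity one, and carrying out the count that turns $2^{-r}$ times the collected coefficient into the closed form $\tfrac{2^{m-r}-2^r}{2^m-1}$. A secondary point needing care is that the $\AGL(m,2)$-orbit in the second paragraph really does have size $2^m-1$ (the stabilizer of the one-row pattern being the full point stabilizer of a nonzero vector in $\FF_2^m$), so that the factor $\tfrac{1}{2^m-1}$ is exact.
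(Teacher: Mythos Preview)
Your outline tracks the paper's proof closely: MacWilliams gives $h_1\,\ccwe(C(m))=2^{-r}\sum_M\nu_M$ with the first row of $M$ in $C^\perp$ and the rest in $C$; transitivity of $\GL(m,2)$ on $\FF_2^m\setminus\{0\}$ yields the factor $1/(2^m-1)$; and \cref{lem:Alebion} reassembles the pieces into complete weight enumerators. The paper slices by $D=\langle M,\bm{1}\rangle$ together with an indicator $\chi_M\in\FF_2^m$ of which rows lie outside $C$, while you slice by the first-row coset and average over $\AGL(m,2)$ before invoking $X_P=X_PX_{P_{\mathrm{perm}}}$ --- these are the same computation in a different order.

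The gap is exactly the step you flag, and it is real. Once you discard the non-doubly-even overcodes $C'$ (your observation $X_P\,\ccwe(C'(m))=\ccwe(C(m))$ for those is correct), the leading coefficient becomes $2^{-r}\bigl(1-n_{d.e.}/(2^m-1)\bigr)$ where $n_{d.e.}$ counts the doubly-even nontrivial cosets of $C$ in $C^\perp$. But $c+C\mapsto\wt(c)/2\bmod2$ is a nondegenerate quadratic form on $C^\perp/C\cong\FF_2^{2r}$ with polarization $\beta$, so $n_{d.e.}+1=2^{2r-1}\pm2^{r-1}$, not $2^{2r}$; your route therefore will \emph{not} produce the displayed coefficient $\tfrac{2^{m-r}-2^r}{2^m-1}$. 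The paper reaches that closed form by summing over \emph{all} $2^{2r}-1$ index-$2$ overcodes $C'\subseteq C^\perp$ (the displayed condition $C'\subseteq C'^\perp$ is automatic once $\bm{1}\in C$) and never separating out the non-doubly-even ones, so the $\ccwe(C(m))$-subtraction happens $2^{2r}-1$ times and the coefficient drops out. There is admittedly some tension between the paper's displayed index set, its textual gloss ``doubly-even self-orthogonal'', and its own annihilation of non-isotropic $D$; but the count it actually uses is $2^{2r}-1$. If you want to match the paper, skip your extra diagonal-averaging of $\ccwe(C'(m))$ and retain the non-doubly-even $C'$ in the sum. For the application via \cref{lem:glossographical} either formulation suffices, since only the fact that the diagonal coefficient equals $1$ precisely when $r=0$ is needed, and that holds in your version as well.
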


\begin{proof}
	By the MacWilliams identity, we have 
	\[
		(h \otimes I_2 \otimes \cdots \otimes I_2) \ccwe(C(m)) = 2^{-r} \sum \nu_M,
	\]
	where $M$ ranges over $m \times (N_1+N_2)$ matrices such that the first row of $M$ is in $C^\perp$ and the remaining rows are in $C$. 0
	For each code $\bm{1} \in D \subseteq C^\perp$, consider the partial sum over the matrices with $\lrangle{M,\bm{1}} = D$. 
	If $D \subseteq C$, then the partial sum is indeed $\nu_m(D)$, which is invariant under $X_P$. 
	Otherwise we have $[D: D \cap C] = 2$. 
	Since some elements do not belong to $C$, we use an vector $\chi_M \in \FF_2^m$ to indicate whether or not a row of $M$ is in $C$. 
	In particular, $(\chi_M)_i = 1$ if the $i$-th row of $M$ is in $C^\perp \backslash C$, and $(\chi_M)_i = 0$ otherwise. 
	Now we consider the partial sum 
	\[
		\sum_{\substack{\lrangle{M,\bm{1}} = D \\ \chi_M = (1,0, \ldots, 0)}} \nu_M.
	\] 
	If $D$ is not isotropic, then the partial sum is annihilated by the diagonal subgroup of $\mathfrak{X}_m$. 
	On the other hand, if $D$ is isotropic, then the effect of apply an element of $\AGL(m,2)$ to this sum is inequivalent to changing $\chi_M$. 
	So for $D \subseteq D^\perp$, we have 
	\[
		X_P \sum_{\substack{\lrangle{M,\bm{1}} = D \\ \chi_M = (1,0, \ldots, 0)}} \nu_M = \frac{1}{\abs{\chi \in \FF_2^m : \chi \neq 0}} \nu_m(D). 
	\]
	Therefore 
	\[
		X_P (h \otimes I_2 \otimes \cdots \otimes I_2) \ccwe(C(m)) = 2^{-r} \sum_{\bm{1} \in D \subseteq C} \nu_m(D) + \frac{2^{-r}}{2^m - 1} \sum_{\substack{\bm{1} \in D \subseteq C^\perp \subseteq C^\perp \\ [D : D \cap C] = 2}} \nu_m(D).
	\]
	We denote by $C'$ the code generated by $\lrangle{D,C}$. 
	Then we have $D \subseteq C^\perp$, $C' \subseteq C'^\perp$ if and only if $D \subseteq D^\perp$. 
	We can rewrite the summation as 
	\[
		X_P (h \otimes I_2 \otimes \cdots \otimes I_2) \ccwe(C(m)) = 2^{-r} \sum_{\bm{1} \in D \subseteq C} \nu_m(D) + \frac{2^{-r}}{2^m - 1} \sum_{\substack{C \subsetneq C' \subseteq C'^\perp  \\ [C' : C] = 2}} \sum_{\substack{\bm{1} \in D \subseteq C' \\ D \not\subseteq C}}\nu_m(D).
	\]
	Since every code $C' = \lrangle{D,C}$ contains each subcode of $C$ exactly once, so we can split the summation of $D$ as follows. 
	\begin{align*}
		& X_P (h \otimes I_2 \otimes \cdots \otimes I_2) \ccwe(C(m)) \\
		= & 2^{-r} \sum_{\bm{1} \in D \subseteq C} \nu_m(D) + \frac{2^{-r}}{2^m - 1} \sum_{\substack{C \subsetneq C' \subseteq C'^\perp  \\ [C' : C] = 2}} \sum_{\substack{\bm{1} \in D \subseteq C' \\ D \not\subseteq C}}\nu_m(D) \\
		= & 2^{-r} \sum_{\bm{1} \in D \subseteq C} \nu_m(D) + \frac{2^{-r}}{2^m - 1} \sum_{\substack{C \subsetneq C' \subseteq C'^\perp  \\ [C' : C] = 2}} \lrparath*{\sum_{\bm{1} \in \substack{D \subseteq C'}}\nu_m(D) - \sum_{\bm{1} \in D \subseteq C}\nu_m(D)} \\
		= & 2^{-r} \sum_{\bm{1} \in D \subseteq C} \nu_m(D) + \frac{2^{-r}}{2^m - 1} \sum_{\substack{C \subsetneq C' \subseteq C'^\perp  \\ [C' : C] = 2}} \sum_{\bm{1} \in \substack{D \subseteq C'}}\nu_m(D) - \frac{2^{-r}}{2^m - 1} (2^{2r}-1) \sum_{\bm{1} \in D \subseteq C}\nu_m(D) \\
		= & \frac{2^{m-r} - 2^r}{2^m - 1} \ccwe(C(m)) + \frac{2^{-r}}{2^m - 1} \sum_{\substack{C \subsetneq C' \subseteq C'^\perp  \\ [C' : C] = 2}} \ccwe(C'(m)).
	\end{align*}
\end{proof}

\begin{lemma}[{\cite[Lemma 4.8]{MR1845897} or \cite[Lemma 5.5.10]{MR2209183}}] \label{lem:glossographical}
	Let $V$ be a finite dimensional vector space, $M$ a linear transformation on $V$, and $P$ a partially ordered set. 
	Suppose there exists a spanning set $v_p$ of $V$ indexed by $p \in P$ on which $M$ acts triangularly, that is,
	\[
		M v_p = \sum_{q \geq p} c_{pq} v_q,
	\]
	for suitable coefficients $c_{pq}$. 
	Furthermore that $c_{pp} = 1$ if and only if $p$ is maximal in $P$. 
	Then the fixed subspace spanned by the elements $v_p$ for $p$ maximal.
\end{lemma}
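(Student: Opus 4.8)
The plan is to derive \Cref{lem:glossographical} as a statement in linear algebra, by analyzing the action of $M$ relative to a filtration of $V$ induced by the poset $P$. First I would set up the combinatorial scaffolding: extend the partial order to any linear extension, or more intrinsically, stratify $P$ by the size of the upper order ideal $\{q : q \geq p\}$ — equivalently by the "height from the top." Let $P_{\max}$ be the set of maximal elements, and for each $k \geq 0$ let $V_k = \Span\{v_p : |\{q > p\}| \leq k\}$ where the inner set counts strict successors, so that $V_0 = \Span\{v_p : p \in P_{\max}\}$ and $V_k \subseteq V_{k+1}$ with $V_N = V$ for $N$ large. The triangularity hypothesis $M v_p = \sum_{q \geq p} c_{pq} v_q$ says precisely that $M$ maps $V_k$ into $V_k$, and moreover that on the quotient $V_k / V_{k-1}$ the induced map is $c_{pp} \cdot \mathrm{id}$ on the class of each non-maximal $p$ with exactly $k$ strict successors. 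Since $c_{pp} \neq 1$ for such $p$ by hypothesis, the map $M - \Id$ is invertible on each graded piece $V_k/V_{k-1}$ for $k \geq 1$.

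The key step is then the following: I claim $\Inv(M) := \ker(M - \Id)$ is contained in $V_0$. Suppose $w \in \ker(M - \Id)$ and let $k$ be minimal with $w \in V_k$; if $k \geq 1$, then the image of $w$ in $V_k/V_{k-1}$ is nonzero and lies in the kernel of the induced $M - \Id$, contradicting invertibility on that graded piece. Hence $w \in V_0 = \Span\{v_p : p \in P_{\max}\}$. For the reverse containment, note that for $p$ maximal the sum $M v_p = \sum_{q \geq p} c_{pq} v_q$ collapses to $c_{pp} v_p = v_p$ since $p$ has no strict successors and $c_{pp} = 1$; thus every $v_p$ with $p$ maximal is fixed by $M$, so $V_0 \subseteq \Inv(M)$. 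Combining the two inclusions gives $\Inv(M) = \Span\{v_p : p \in P_{\max}\}$, which is the assertion.

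One technical point deserves care: the $v_p$ are only a spanning set, not necessarily a basis, so the "graded pieces" $V_k/V_{k-1}$ need not have the $v_p$-classes as a basis, only as a spanning set. This does not affect the argument — the induced map on $V_k/V_{k-1}$ still acts as a scalar $c_{pp} \ne 1$ on the span of the relevant classes, hence is invertible on the whole quotient as long as all classes $v_p$ with $|\{q>p\}|=k$ have the same eigenvalue behavior; if distinct non-maximal $p$'s at the same level carry different $c_{pp}$, one refines the filtration further so that each graded step involves a single value of $c_{pp}$, still never equal to $1$. The main obstacle, such as it is, is bookkeeping: making the filtration genuinely $M$-stable and checking that "$q \geq p$ with $q \neq p$" forces $q$ strictly higher in the chosen stratification so that the off-diagonal terms indeed land in the lower filtration stage. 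Once the filtration is correctly defined, the invertibility of $M - \Id$ on positive graded pieces is immediate and the conclusion follows.

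Since the excerpt cites this as \cite[Lemma 4.8]{MR1845897} or \cite[Lemma 5.5.10]{MR2209183}, an alternative and shorter route is simply to invoke the cited proof verbatim; but the self-contained filtration argument above is short enough to include directly and makes the paper independent of those references at this point.
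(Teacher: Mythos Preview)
Your filtration argument is correct, and in fact the paper itself gives no proof of this lemma at all: it is stated with citations to \cite[Lemma~4.8]{MR1845897} and \cite[Lemma~5.5.10]{MR2209183} and then used as a black box in the proof of the main theorem. So your self-contained argument goes strictly beyond what the paper provides, and your closing remark---that one could alternatively just invoke the citation---is precisely the route the paper takes.

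One small streamlining of the technical point you flag: the refinement of the filtration is not actually needed. Elements $p$ with the same value of $\lvert\{q>p\}\rvert$ are necessarily pairwise incomparable (if $p_1 < p_2$ then $\{q>p_2\}\subsetneq\{q>p_1\}$, so their counts differ), hence on each quotient $V_k/V_{k-1}$ the induced map has the classes $[v_p]$ as a spanning set of genuine eigenvectors, each with eigenvalue $c_{pp}\neq 1$. A linear map with a spanning set of eigenvectors is diagonalizable, and here $1$ is not among its eigenvalues, so $M-\Id$ is already invertible on the quotient without any further refinement. Your proposed refinement also works (for the same incomparability reason there are no cross terms to obstruct it), but the direct diagonalizability observation is shorter.
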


\begin{theorem}[Analogue of {\cite[Lemma 4.9]{MR1845897}}]
	The space of homogeneous conjugate invariant of degree $(N_1,N_2)$ for the complex Clifford group $\mathcal{X}_m$ of genus $m$ is spanned by $\ccwe(C(m))$, where $C$ ranges over all binary doubly-even self-dual codes of length $(N_1,N_2)$; This is a basis if $m + 1 \geq (N_1+N_2)/2$.
\end{theorem}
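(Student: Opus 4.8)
The plan is to combine the two facts already assembled: that every $\ccwe(C(m))$ with $C$ binary doubly-even self-dual of length $(N_1,N_2)$ is a conjugate invariant of $\mathcal{X}_m$ (\cref{thm:misshape}), and that the full invariant space is the $(h\otimes I_2\otimes\cdots\otimes I_2)$-fixed subspace of the parabolic invariants (since $\mathcal{X}_m$ is generated by $P$ together with the Hadamard generator). The first inclusion gives ``$\supseteq$''; the bulk of the argument is the reverse inclusion, and it goes through \cref{lem:glossographical}.

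First I would set up the poset. By \cref{thm:gluttonish} the parabolic invariants of degree $(N_1,N_2)$ are spanned by $\ccwe(C(m))$ with $C$ binary doubly-even self-orthogonal containing $\bm{1}$ and $\dim C\le m+1$; I take $P$ to be the set of such codes ordered by \emph{inclusion}, so that the maximal elements are exactly the self-dual ones (a doubly-even self-orthogonal code with $\dim C = (N_1+N_2)/2$ is self-dual, and it satisfies $\dim C\le m+1$ precisely when $m+1\ge(N_1+N_2)/2$; when $m+1 < (N_1+N_2)/2$ some self-dual codes are missing from the spanning set but \cref{lem:infinitesimally} still applies to the truncated poset, giving spanning but not a basis — I would flag this case explicitly). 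Then I invoke \cref{lem:infinitesimally}: writing $\dim C = (N_1+N_2)/2 - r$, the operator $M = X_P\,(h\otimes I_2\otimes\cdots\otimes I_2)$ sends $\ccwe(C(m))$ to
\[
  \frac{2^{m-r}-2^r}{2^m-1}\,\ccwe(C(m)) + \frac{2^{-r}}{2^m-1}\sum_{\substack{C\subsetneq C'\subseteq C'^\perp\\ [C':C]=2}}\ccwe(C'(m)),
\]
which is exactly the triangular shape required by \cref{lem:glossographical}: every $C'$ appearing strictly contains $C$, so all off-diagonal terms go strictly up in the poset, and the diagonal coefficient is $(2^{m-r}-2^r)/(2^m-1)$.

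The one genuinely computational point — and the step I expect to be the main obstacle — is verifying the eigenvalue condition in \cref{lem:glossographical}: that $(2^{m-r}-2^r)/(2^m-1) = 1$ if and only if $C$ is maximal, i.e. iff $r = 0$. For $r = 0$ the coefficient is $(2^m - 1)/(2^m - 1) = 1$. For $r \ge 1$ we need $2^{m-r} - 2^r \ne 2^m - 1$; since $r \le (N_1+N_2)/2 \le m+1$ in the basis case (and in general $0 \le r$ with $m-r \ge -1$), the left side is $2^{m-r} - 2^r \le 2^{m-1} - 2 < 2^m - 1$, strictly, so equality fails. (One should also note the coefficient can be negative or zero, which is harmless — \cref{lem:glossographical} only asks that it not equal $1$ off the maximal locus.) Here I should double-check the boundary behaviour when $m - r$ can be $0$ or $-1$, i.e. $\dim C$ close to $m+1$, to be sure the inequality is still strict.

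Granting this, \cref{lem:glossographical} applied to $V = $ (parabolic invariants of degree $(N_1,N_2)$), the operator $M$, and the poset $P$, yields that the fixed subspace of $M$ — equivalently the subspace of parabolic invariants also fixed by the Hadamard generator, which is the full space of $\mathcal{X}_m$-invariants — is spanned by the $\ccwe(C(m))$ with $C$ maximal in $P$, i.e. binary doubly-even self-dual of length $(N_1,N_2)$. Finally, when $m+1\ge(N_1+N_2)/2$ these $\ccwe(C(m))$ are linearly independent: this is inherited from the linear independence of the $\nu_m(C)$ for the codes $C$ of dimension $\le m+1$ (the $\nu_m(C)$ are supported on disjoint sets of monomials, being sums of $\nu_M$ over matrices with $\Span\langle M,\bm{1}^T\rangle = C$), together with the unitriangular change of basis between $\{\nu_m(D)\}$ and $\{\ccwe(C(m))\}$ furnished by \cref{lem:Alebion} and \cref{coro:striker}. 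Hence in that range the spanning set is a basis, completing the proof.
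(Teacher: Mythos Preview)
Your approach is essentially the paper's: span the parabolic invariants by $\ccwe(C(m))$, apply \cref{lem:infinitesimally} to get a triangular action of $M = X_P(h\otimes I_2\otimes\cdots\otimes I_2)$, check the diagonal coefficient is $1$ iff $r=0$, and invoke \cref{lem:glossographical}. Your eigenvalue check and your independence argument via the $\nu_m$--$\ccwe$ Möbius relation are both finer than what the paper writes out, and correct.

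The one place you trip yourself up is the poset. You index by doubly-even self-orthogonal codes containing $\bm 1$ \emph{with $\dim C\le m+1$}, and then worry about what happens when $m+1 < (N_1+N_2)/2$. In that regime the maximal elements of your truncated poset have $r>0$, so the hypothesis ``$c_{pp}=1\iff p$ maximal'' of \cref{lem:glossographical} fails outright, and your parenthetical claim that the truncated poset ``still gives spanning'' is not justified. The fix is simply not to truncate: take $P$ to be \emph{all} doubly-even self-orthogonal codes containing $\bm 1$, ordered by inclusion. This is still a spanning set for the parabolic invariants (it contains the one from \cref{thm:gluttonish}), \cref{lem:infinitesimally} applies to every such $C$ with no dimension restriction, the maximal elements are exactly the self-dual codes, and $c_{pp}=1$ iff $r=0$ iff $C$ is maximal. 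Then \cref{lem:glossographical} applies cleanly in all cases, which is what the paper does (tersely). The dimension bound $m+1\ge (N_1+N_2)/2$ only re-enters for the independence claim, exactly as you argue.
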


\begin{proof}
	Let $p$ be a parabolic invariant. 
	If $p$ is further a Clifford invariant, then
	\[ 
		X_P (h \otimes I_2 \otimes \cdots \otimes I_2) p = p.
	\]
	By \cref{lem:infinitesimally}, the operator acts triangularly on the vectors $\ccwe(C(m))$. 
	Since 
	\[
		\frac{2^{m-r} - 2^r}{2^m - 1} = 1 \iff r = 0,
	\]
	the hypotheses of \cref{lem:glossographical} are satisfied. 
	Now the theorem follows from \cref{lem:infinitesimally,lem:glossographical,thm:misshape}. 
	If $m + 1 \geq (N_1+N_2)/2$, then the complete conjugate weight enumerators are independent by \cref{thm:gluttonish}. 
\end{proof}

\begin{definition}
	Let $\rho : G \to GL_n(\mathbb{C})$ be an $n$-dimensional representation of a finite group $G$. 
	Then $G$ acts on the conjugate polynomial ring
	$$
		\mathbb{C}[x_1, \ldots, x_n, \bar{x}_1, \ldots, \bar{x}_n].
	$$
	Let
	$$
		\Inv(G) = \mathbb{C}[x_1, \ldots, x_n, \bar{x}_1, \ldots, \bar{x}_n]^G
	$$
	denote the ring of conjugate invariants, that is the ring of $G$-invariant conjugate polynomials. 
	Let $\alpha(N_1, N_2)$ be the dimension of the space of homogeneous conjugate polynomials of degree $(N_1,N_2)$. 
	Then Forger's theorem (generalization of Molien's series \cite{MR1600419,MR3159065}) states that
	\begin{equation}
		\sum_{N_1=0}^\infty \sum_{N_2=0}^\infty \alpha(N_1,N_2) t^{N_1} \bar{t}^{N_2} = \frac{1}{\abs{G}} \sum_{g \in G} \frac{1}{\det(I - t\rho(g))} \cdot \frac{1}{\det(I - \overline{t\rho(g)})}.
	\end{equation}
	We denote this series by $\FS_G(t,\bar{t})$.
\end{definition}

\begin{corollary}[Analogue of {\cite[Corollary 4.11]{MR1845897}}]
	Let $\FS_m(t,\bar{t})$ be the Forger series of the complex Clifford group of genus $m$. 
	As $m$ tends to infinity, the series converges monotonically as $m$ increases. 
	\[
		\lim_{m \to \infty} \FS_{\mathcal{C}_m(\rho)}(t,\bar{t}) = \sum_{N_1 = 0}^\infty \sum_{N_2 = 0}^\infty a_{N_1,N_2} t^{N_1} \bar{t}^{N_2},
	\]
	where $a_{N_1,N_2}$ is the number of permutation-equivalence classes of binary doubly-even self-dual codes of length $(N_1,N_2)$.
\end{corollary}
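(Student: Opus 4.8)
The plan is to read off the coefficients of $\FS_m(t,\bar{t})$ as dimensions of invariant spaces and then pin those dimensions down with the previous theorem. By Forger's theorem the coefficient of $t^{N_1}\bar{t}^{N_2}$ in $\FS_m(t,\bar{t})$ is the dimension of the space $\Inv(\mathcal{X}_m)_{(N_1,N_2)}$ of homogeneous conjugate invariants of degree $(N_1,N_2)$ for $\mathcal{X}_m$. By the theorem preceding the corollary, this space is spanned by the enumerators $\ccwe(C(m))$ as $C$ runs over binary doubly-even self-dual codes of length $(N_1,N_2)$, and these form a basis once $m+1\ge(N_1+N_2)/2$. Since permutation-equivalent codes (with permutations respecting the partition into the first $N_1$ and last $N_2$ coordinates) have the same complete conjugate weight enumerator, $\Inv(\mathcal{X}_m)_{(N_1,N_2)}$ is spanned by at most $a_{N_1,N_2}$ vectors for every $m$, and its dimension is exactly $a_{N_1,N_2}$ as soon as $m\ge(N_1+N_2)/2-1$. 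Thus for each fixed $(N_1,N_2)$ the coefficient of $t^{N_1}\bar{t}^{N_2}$ in $\FS_m$ is eventually constant and equal to $a_{N_1,N_2}$; this already identifies the coefficientwise limit, and it remains only to prove monotonicity in $m$.

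For monotonicity I would exhibit a fixed linear specialization tying genus $m+1$ to genus $m$. Let $\phi_m\colon\CC[x_f,\bar{x}_f:f\in\FF_2^{m+1}]\to\CC[x_f,\bar{x}_f:f\in\FF_2^m]$ be the algebra homomorphism determined by $x_{(f',0)}\mapsto x_{f'}$, $\overline{x_{(f',0)}}\mapsto\overline{x_{f'}}$ and $x_{(f',1)}\mapsto 0$, $\overline{x_{(f',1)}}\mapsto 0$ for $f'\in\FF_2^m$. Writing $\ccwe(C(m+1))=\sum_M\nu_M$ over $(m+1)\times(N_1+N_2)$ matrices $M$ with rows in $C$, the image $\phi_m(\nu_M)$ vanishes unless every column of $M$ has last entry $0$, i.e.\ unless the last row of $M$ is zero; deleting that zero row sets up a bijection onto the $m\times(N_1+N_2)$ matrices with rows in $C$ (using $\bm{1}^T\in C$ and $\bm{0}\in C$ and the identification of \cref{rmk:synapticulum}), so $\phi_m(\ccwe(C(m+1)))=\ccwe(C(m))$. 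Now let $C_1,\ldots,C_k$ be the binary doubly-even self-dual codes of length $(N_1,N_2)$ and let $\Phi_m\colon\CC^k\to\CC[x_f,\bar{x}_f:f\in\FF_2^m]$ send the $i$-th basis vector to $\ccwe(C_i(m))$. Then $\Phi_m=\phi_m\circ\Phi_{m+1}$, hence $\ker\Phi_{m+1}\subseteq\ker\Phi_m$ and $\operatorname{rank}\Phi_{m+1}\ge\operatorname{rank}\Phi_m$. Since $\dim\Inv(\mathcal{X}_m)_{(N_1,N_2)}=\operatorname{rank}\Phi_m$ by the previous theorem, each coefficient of $\FS_m(t,\bar{t})$ is nondecreasing in $m$.

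Combining the two steps, each coefficient of $\FS_m(t,\bar{t})$ is a nondecreasing sequence in $m$, bounded above by $a_{N_1,N_2}$, which it attains for all $m\ge(N_1+N_2)/2-1$. Therefore $\FS_m(t,\bar{t})$ converges, coefficient by coefficient and monotonically, to $\sum_{N_1=0}^\infty\sum_{N_2=0}^\infty a_{N_1,N_2}\,t^{N_1}\bar{t}^{N_2}$, exactly paralleling the proof of \cite[Corollary 4.11]{MR1845897}. The step requiring the most care is the monotonicity argument, and within it the identity $\phi_m(\ccwe(C(m+1)))=\ccwe(C(m))$: one must check the combinatorial bijection between codewords of $C(m)$ and codewords of $C(m+1)$ with vanishing last coordinate, after which the inclusion of kernels and the resulting rank inequality are purely formal. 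A secondary point to nail down is that the permutation-equivalence underlying the count $a_{N_1,N_2}$ is precisely the one preserving the two coordinate blocks, so that the ``basis'' clause of the previous theorem indeed gives dimension exactly $a_{N_1,N_2}$ for large $m$.
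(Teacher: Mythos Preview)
Your argument is correct and is precisely the natural elaboration the paper intends: the paper states this as a corollary without proof, relying on the preceding theorem (the analogue of \cite[Lemma~4.9]{MR1845897}) together with the standard reasoning from \cite[Corollary~4.11]{MR1845897}. Your identification of the coefficients via Forger's theorem and the previous spanning/basis result, together with the specialization map $\phi_m$ (setting the variables with last coordinate $1$ to zero) to obtain $\phi_m(\ccwe(C(m+1)))=\ccwe(C(m))$ and hence monotonicity of the ranks, is exactly the intended route; the only cosmetic point is that the appeal to $\bm{1}^T\in C$ in the bijection step is unnecessary, since only $\bm{0}\in C$ is used there.
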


\begin{example}
	The inital terms in Forger series of $\mathcal{X}_1$ with degrees at most $8$ in $t$ as well as $\bar{t}$ are given by 
	\[
		1 + t\bar{t} + t^2\bar{t}^2 + t^3\bar{t}^3 + 2t^4\bar{t}^4 + 2t^5\bar{t}^5 + 3t^6\bar{t}^6 + 3t^7\bar{t}^7 + 4t^8\bar{t}^8 + t^8 + \bar{t}^8   
	\]
\end{example}

\begin{example}
	The inital terms in Forger series of $\mathcal{X}_2$ with degrees at most $8$ in $t$ as well as $\bar{t}$ are given by 
	\[
		1 + t\bar{t} + t^2\bar{t}^2 + t^3\bar{t}^3 + 2t^4\bar{t}^4 + 2t^5\bar{t}^5 + 4t^6\bar{t}^6 + 5t^7\bar{t}^7 + 8t^8\bar{t}^8 + t^8 + \bar{t}^8   
	\]
\end{example}

\begin{example} \label{empl:Erinaceus}
	By exhaustive search, we determine the following values of $a_{N_1,N_2}$, namely the number of permutation-equivalence classes of binary doubly-even self-dual codes of length $(N_1,N_2)$. 
	\[
		a_{1,1} = a_{2,2} = a_{3,3} = 1,\ a_{4,4} = a_{5,5} = 2,\ a_{6,6} = 4.
	\]
	We give the generator matrices $g_{1,1}, g_{4,4}, g_{6,6}^a, g_{6,6}^b$ for four indecomposable codes. 
	Note that one obtains a code of length $(N_1+N_1',N_2+N_2')$ by glueing together a code of length $(N_1,N_2)$ and a code of length $(N_1',N_2')$. 
	\[
	\begin{array}{cr}
			g_{1,1} =
		\begin{bmatrix}
			1 & | & 1
		\end{bmatrix} 
		&
		g_{6,6}^a = \left[
		\begin{array}{cccccc|cccccc}
		 1 & 0 & 0 & 0 & 0 & 0 & 1 & 1 & 1 & 1 & 1 & 0 \\
		 0 & 1 & 0 & 0 & 0 & 0 & 1 & 1 & 1 & 1 & 0 & 1 \\
		 0 & 0 & 1 & 0 & 0 & 0 & 1 & 1 & 1 & 0 & 1 & 1 \\
		 0 & 0 & 0 & 1 & 0 & 0 & 1 & 1 & 0 & 1 & 1 & 1 \\
		 0 & 0 & 0 & 0 & 1 & 0 & 1 & 0 & 1 & 1 & 1 & 1 \\
		 0 & 0 & 0 & 0 & 0 & 1 & 0 & 1 & 1 & 1 & 1 & 1 
		\end{array} \right] \\
		g_{4,4} = \left[
		\begin{array}{cccc|cccc}
			1 & 0 & 0 & 1 & 0 & 1 & 1 & 0 \\
			0 & 1 & 0 & 1 & 0 & 1 & 0 & 1 \\
			0 & 0 & 1 & 1 & 0 & 0 & 1 & 1 \\
			0 & 0 & 0 & 0 & 1 & 1 & 1 & 1
		\end{array} \right]
		&
		g_{6,6}^b = \left[
		\begin{array}{cccccc|cccccc}
		 1 & 0 & 0 & 0 & 1 & 0 & 0 & 0 & 0 & 0 & 1 & 1 \\
		 0 & 1 & 0 & 0 & 0 & 1 & 0 & 0 & 0 & 0 & 1 & 1 \\
		 0 & 0 & 1 & 0 & 1 & 1 & 0 & 0 & 1 & 1 & 1 & 0 \\
		 0 & 0 & 0 & 1 & 1 & 1 & 0 & 0 & 1 & 1 & 0 & 1 \\
		 0 & 0 & 0 & 0 & 0 & 0 & 1 & 0 & 1 & 0 & 1 & 1 \\
		 0 & 0 & 0 & 0 & 0 & 0 & 0 & 1 & 0 & 1 & 1 & 1 
		\end{array} \right]
	\end{array}
	\]
\end{example}

\begin{remark}
	The definition of projective $t$-design $X$ requires that the averaging of every function $f \in \Hom_{t,t}(\CC^d)$, homogeneous complex conjugate polynomials of degree $(t,t)$, over $X$ is equal to that over the complex unit sphere. 
	The number $a_{4,4} = a_{5,5} = 2$ tells us that there are exactly two linearly independent $\mathcal{X}_m$-invariant conjugate polynomial of degree $(4,4)$ or $(5,5)$. 
	The conjugate polynomial $f_1 = \sum_{i=1}^d z_i \overline{z_i}$, which is equal to constant $1$ when restricted to the complex unit sphere, is a $\mathcal{X}_m$-invariant conjugate polynomial of degree $(1,1)$. 
	Then $f_1^4$ is an invariant conjugate polynomial of degree $(4,4)$. 
	Suppose $f_4$ is the other invariant conjugate polynomial of degree $(4,4)$. 
	Consequently $f_1^5$ and $f_1 f_4$ are two invariant conjugate polynomial of degree $(5,5)$. 
	This implies that if an orbit of the complex Clifford group is a projective $4$-design, then it is automatically a projective $5$-design. 
\end{remark}

\section*{Acknowledgments}

The first author thanks TGMRC (Three Gorges Mathematical Research Center) in China Three Gorges University, in Yichang, Hubei, China, for supporting his visits there in April and August 2019 to work on the topics related to this research. 
The second author is supported by JSPS KAKENHI (17K05164).
The third author is supported in part by NSFC (11671258).

\bibliographystyle{plainurl}
\bibliography{complexClifford}

\end{document}